\newtheorem{theorem}{\textbf{Theorem}}[section]
\newtheorem{remark}{\textbf{Remark}}[section]
\newtheorem{example}{\textbf{Example}}[section]
\title{Second-Order Linear Relaxation Schemes for Time-Fractional  Phase-Field Models}
\author{Hui Yu\thanks{School of Mathematics and Physics, University of Science and Technology Beijing, Beijing 100083, China (huiyu@xs.ustb.edu.cn).} \,,
Zhaoyang Wang\thanks{Corresponding author. School of Mathematical Sciences, Laboratory of Mathematics and Complex Systems, MOE, Beijing Normal University, Beijing 100875, China; \ Research Center for Mathematics, Beijing Normal University, Zhuhai, Guangdong 519088, China (zhaoyang584520@163.com).} \,,
Ping Lin\thanks{Corresponding author. Division of Mathematics, University of Dundee, Dundee DD1 4HN,United Kingdom (p.lin@dundee.ac.uk).}  }
\affil{}
\date{}
\begin{document}

\maketitle
\begin{abstract}
 This work uses a linear relaxation method to develop efficient numerical schemes for the time-fractional Allen-Cahn and Cahn-Hilliard equations. The $L1^{+}$-CN formula is used to discretize the fractional derivative, and an auxiliary variable is introduced to approximate the nonlinear term by solving an algebraic equation rather than a differential equation as in the invariant energy quadratization (IEQ) and scalar auxiliary variable (SAV) approaches. The proposed semi-discrete scheme is linear, second-order accurate in time, and the inconsistency between the auxiliary and the original variables does not deteriorate over time. Furthermore, we prove that the scheme is unconditionally energy stable. Numerical results demonstrate the effectiveness of the proposed scheme.

    \medskip
\noindent{\bf Keywords}: Time-fractional phase-field models, Linear relaxation method, Stable scheme

  \medskip
\end{abstract}

\section{Introduction}
In recent years, the phase-field approach has been one of the most popular methodologies for simulating multiphase flows and phase‑separation processes. The phase-field model uses an order parameter $\phi$ to describe
the interface of two-fluids or the phase transition between two phases \cite{allen1979microscopic}, and $\phi$ also be employed to characterize the volume or mass fraction of a substance \cite{guo2015thermodynamically, guo2021diffuse}. It is regarded as a branch of gradient flows that evolve toward minimizing the free energy $E(\phi)$, and has been widely applied to materials science (See e.g. \cite{guo2020modeling}), multiphase flow (See e.g. \cite{guo2015thermodynamically, yang2025two}), and tumor growth modeling (See e.g. \cite{wang2025stability}).

Over the past few years, the time-fractional phase-field model \cite{tang2019energy, liu2018time, liao2021energy} has been extensively studied due to its ability to capture memory effects and long‐time scale behavior in phase separation processes. It replaces the classical integer‐order time derivative with a Caputo fractional derivative of order $0<\alpha<1$. In this work, we consider the time-fractional Allen-Cahn (A-C) and Cahn-Hilliard (C-H) equations:
\begin{equation}\label{the1-1}
 \begin{aligned}
 &\frac{\partial ^\alpha}{\partial t^\alpha} \phi= \mathcal{G} \mu, \\
 &\mu = \frac{\delta E}{\delta \phi}=-\epsilon^2\Delta \phi+F'(\phi),
 \end{aligned}
\end{equation}
where $E=\int_\Omega\left(\frac{\epsilon^2}{2}\left|\nabla \phi\right|^2+F(\phi)\right) d \mathbf{x}$ is the free energy, $\epsilon>0$ is an interface width parameter, $F(\phi)=\frac{1}{4} \left(\phi^2-1 \right)^2$ is the double well potential, and $\mathcal{G}=-1$ (A-C) or $\mathcal{G}=\Delta$ (C-H) is a nonpositive operator. Here, the Caputo fractional derivative of $\phi$ is given by
\begin{equation}\nonumber
 \begin{aligned}
\frac{\partial ^\alpha}{\partial t^\alpha}\phi =\frac{1}{\Gamma(1-\alpha)}\int_0^t\frac{\phi'(s)}{(t-s)^\alpha}\, ds,
    \end{aligned}
\end{equation}
where $\Gamma(\cdot)$ is the gamma function.

Similar to the integer-order ($\alpha=1$) phase field equations, the time-fractional A-C and C-H equations also follow certain energy laws. Tang \cite{tang2019energy} et al. proved that the energy is bounded above by the initial energy $E(t)\leq E(0)$ (for $t\in(0,T)$). Quan et al. \cite{quan2020define} subsequently proved that the time-fractional derivative of energy is nonpositive, i.e., $\partial_t^\alpha E(t)\leq 0$.

To solve fractional-order phase-field models, many numerical methods proposed for integer-order phase-field models can be applied, for example, energy quadratization method (IEQ, \cite{guillen2013linear, yang2016linear, yang2017numerical}) and scalar auxiliary variable (SAV) method \cite{shen2019new}. By combining the recently popular SAV method \cite{shen2019new}, Yu et al. \cite{yu2023exponential} constructed a linear, second-order accurate in time and energy stable numerical scheme. The IEQ and SAV methods require replacing the algebraic expression of the auxiliary variable directly by its time derivative to formulate an ordinary differential equation. However, this replacement of an algebraic by its direct time derivative is considered to have weak instability in the context of the differential-algebraic equation, that is, the original algebraic expression of the auxiliary variable will not be well maintained as the time goes (See e.g. \cite{lin1997sequential}). Such a temporal error increase for both IEQ and SAV methods has been observed in literature, e.g., \cite{jiang2022improving} and \cite{alsafri2023numerical}. Recently, based on an idea originally introduced by \cite{besse2002order} for the Schr\"{o}dinger equation, a linear relaxation method for the integer‑order phase‑field model was independently proposed by \cite{zhang2024linear} and \cite{alsafri2023numerical}, the latter further considering coupling with fluid equations. The key idea is that the auxiliary variables are discretized on a time-staggered grid, resulting in that the algebraic expression of the auxiliary variable in either IEQ or SAV is directly solved rather than replacing it by its time derivative, so as to overcome the weak temporal instability known in the numerical treatment in the differential-algebraic equation context (See e.g. \cite{lin1997sequential}).

In this paper, we use the linear relaxation method combined with the $L1^{+}$-CN formula to construct linear, second-order accurate in time, and unconditionally energy stable numerical schemes for the time-fractional A-C and C-H equations. Furthermore, we illustrate through numerical results that the algebraic equation associated with the expression of the auxiliary variable in these schemes does not deteriorate over time, thus ensuring the numerical reliability in long-term computations.

The rest of this paper is organized as follows. In Section \ref{section2}, we develop the $L1^{+}$ linear relaxation schemes and prove their energy stability for the time-fractional Allen–Cahn and Cahn–Hilliard equations. In Section \ref{section3}, we carry out numerical experiments to demonstrate the effectiveness of the proposed schemes. Some conclusions and remarks are given in Section \ref{section4}.

\section{Linear relaxation schemes}
\label{section2}
In this section, we introduce the $L1^{+}$ linear relaxation schemes for the time-fractional C-H and A-C equations and prove their energy stability.

\subsection{The $L1^+$ linear relaxation scheme for the time-fractional C-H equation}

We consider the time-fractional C-H equation:
\begin{equation}\label{pf}
\begin{aligned}
&\partial_t^\alpha \phi= M\Delta \mu,\\
&\mu = -\varepsilon^2  \Delta \phi + f(\phi),
\end{aligned}
\end{equation}
with the periodic or homogeneous Neumann boundary conditions. Here $f(\phi)=F'(\phi)=\left(\phi^2-1\right)\phi$ and $M$ is a constant mobility coefficient.  Based on the idea of linear relaxation method \cite{besse2002order, zhang2024linear, alsafri2023numerical,jiang2023linear}, we introduce a auxiliary variable $r(\phi)$
\begin{equation}
    r(\phi) = \phi^2-1-S,
\end{equation}
where $S\geq 0$ is a stabilization parameter. Thus equation (\ref{pf}) can be reformulated as:
\begin{subequations}\label{the4}
 \begin{align}
 &\partial_t^\alpha \phi = M\Delta \mu, \label{the4a}\\
 &\mu = -\varepsilon^2 \Delta \phi + r\cdot \phi+S\phi, \label{the4b}\\
 &r  = \phi^2-1-S. \label{the4c}
    \end{align}
\end{subequations}

The corresponding modified energy is reformulated in the form of
\begin{equation}\label{modienergy}
    \tilde {E} (\phi,r) := \int_\Omega \left(\frac{ \varepsilon^2}{2}|\nabla \phi|^2 + \frac{1}{2} (r+S)\left(\phi^2-1 -S \right) -\frac{1}{4}r^2\right) \ d \mathbf{x} + \frac{S^2}{4}|\Omega|.
\end{equation}
By substituting Eq. (\ref{the4c}) into Eq.~(\ref{modienergy}), the modified energy $\tilde{E}(\phi, r)$ is demonstrated to be consistent with the original energy $E(\phi)$. Consequently, for a given initial condition $\phi_0$, the relation $\tilde{E}(t_0) = E(t_0)$ holds. Furthermore, the time derivative of the modified energy satisfies
\begin{equation}\label{modified_energy_dt}
    \begin{aligned}
        \frac{d \tilde E(\phi,r)}{dt}  &= \int_\Omega\frac{\delta \tilde E}{\delta \phi}\phi_t + \frac{\delta \tilde E}{\delta r}r'\phi_t d \mathbf{x}\ =\int_\Omega \left(\varepsilon^2 \Delta\phi +\frac{1}{2} (r+S)\cdot2\phi+\frac{1}{2}r^{'}\cdot \left(\phi^2-1-S \right)-\frac{1}{2}rr' \right) \phi_t d\mathbf{x} \\
        &= M^{-1}\int_\Omega\left( \mathcal{G} ^{-1} \partial_t^\alpha\phi\right)\phi_t d \mathbf{x}.
    \end{aligned}
\end{equation}
 Based on the positive definiteness of the kernel function \cite[Corollary 2.1]{tang2019energy}, the modified energy is proven to satisfy the fractional energy dissipation law that
\begin{equation}\label{the7}
    \begin{aligned}
            &\tilde E(\phi(T), r(T))-\tilde E(\phi(0), r(0))  =M^{-1}\int_0^T \int_\Omega\left( \mathcal{G} ^{-1} \partial_t^\alpha\phi\right)\phi_t d \mathbf{x} \, dt\leq 0.
    \end{aligned}
\end{equation}

To discretize the Caputo fractional derivative, we employ the $L1^{+}$ scheme with a non-uniform temporal grid defined by $0 = t_0 < t_1 < \cdots<t_k<\cdots < t_N = T$, where the time step sizes are $\tau_k = t_k - t_{k-1}$ for $k = 1, 2, \ldots, N$. Let $u^k$ denote the approximation of $u$ at time $t_k$, with the discrete difference $\nabla_\tau u^k = u^k - u^{k-1}$ and the midpoint approximation $u^{k-\frac{1}{2}} = \frac{u^k + u^{k-1}}{2}$. For $n \geq 1$, the $L1^{+}$ scheme for the fractional Caputo term is expressed as \cite{ji2020adaptive}
\begin{equation}\label{L1+}
\left( \partial_{\tau}^{\alpha} u \right)^{n-\frac{1}{2}} := \frac{1}{\Gamma(1-\alpha) \tau_n} \int_{t_{n-1}}^{t_n} \sum_{k=1}^n \int_{t_{k-1}}^{\min\{t_k, t\}} (t - s)^{-\alpha} \frac{u^k - u^{k-1}}{\tau_k} \, ds \, dt = \sum_{k=1}^n b_{n-k}^{(n)} \nabla_\tau u^k,
\end{equation}
where the convolution kernel is given by
\begin{equation}
b_{n-k}^{(n)} = \frac{1}{\Gamma(1-\alpha) \tau_n \tau_k} \int_{t_{n-1}}^{t_n} \int_{t_{k-1}}^{\min\{t_k, t\}} (t - s)^{-\alpha} \, ds \, dt.
\end{equation}

By using the relaxation method on the nonlinear term and integrating Eq.(\ref{the4}) over $[t_{n} , t_{n+1}]$, $n\geq 0$, yields
\begin{subequations}\label{L1+chrelaxation}
    \begin{align}
        \frac{1}{\tau_{n+1}}\int_{t_{n}}^{t_{n+1}}\partial_t^\alpha \phi \, dt &=  \frac{1}{\tau_{n+1}}\int_{t_{n }}^{t_{n+1}} M\Delta \mu\, dt \label{L1+chrelaxation1}\\
         \frac{1}{\tau_{n+1}}\int_{t_{n}}^{t_{n+1}}\mu \,dt&=  \frac{1}{\tau_{n+1}}\int_{t_{n}}^{t_{n+1}}\left(-\varepsilon^2 \Delta \phi + \left(r +S \right)\phi\right)\,dt\\
         \frac{1}{\tau_{n+1}}\int_{t_n}^{t_{n+1}}r \,dt & =  \frac{1}{\tau_{n+1}}\int_{t_{n}}^{t_{n+1}}\left(\phi^2-1 -S\right)\, dt.
    \end{align}
\end{subequations}
Adapting the $L1^{+}$ formulation (\ref{L1+}) to Eq.(\ref{L1+chrelaxation1}), the $L1^{+}$ linear relaxation scheme for the system (\ref{the4}) is derived as
\begin{subequations}\label{Dis_RRER_CH}
    \begin{align}
        &\partial_\tau^\alpha \phi^{n+\frac{1}{2}} =  M\Delta \mu^{n+\frac{1}{2}},\label{Dis_RRER_CH1}\\
        &\mu^{n+\frac{1}{2}} = -\varepsilon^2 \Delta \phi^{n+\frac{1}{2}} +   \left(r^{n+\frac{1}{2}} +S \right)\phi^{n+\frac{1}{2}},\label{Dis_RRER_CH2}\\
        &\frac{r^{n+\frac{1}{2}} +r^{n-\frac{1}{2}} }{2} = \left(\phi^n\right)^2-1 -S,\label{Dis_RRER_CH3}
    \end{align}
\end{subequations}
where $\phi^{n+\frac{1}{2}}$ and $\mu^{n+\frac{1}{2}}$ denote the approximations at the midpoint $t_{n+\frac{1}{2}}$.
For $n=0$, the initial condition is defined such that $r^{\frac{1}{2}} =   \phi^0(1-\phi^0) - S$, which implies $r^{-\frac{1}{2}} = \phi^0(1-\phi^0) - S$. Consequently, for a given initial value $\phi^0$, the modified energy satisfies $\tilde{E}\left( \phi^0, r^{-\frac{1}{2}} \right) = E\left( \phi^0 \right)$.

Furthermore, we prove that scheme (\ref{Dis_RRER_CH}) preserves the energy stability corresponding to (\ref{the7}).

\begin{theorem}\label{theorem_energy_TFCH}
The $L1^+$ linear relaxation scheme (\ref{Dis_RRER_CH}) preserves the modified energy dissipation law
    \begin{equation}\label{disenergylawch}
        \tilde E\left( \phi^{n+1} , r^{n+\frac{1}{2}}\right)\leq   \tilde E\left( \phi^{0} , r^{-\frac{1}{2}}\right)=  E\left( \phi^0\right),
    \end{equation}
where
\begin{equation}
    \tilde {E} (\phi^{n+1},r^{n+\frac{1}{2}}) = \int_\Omega \left(\frac{\varepsilon^2}{2}|\nabla \phi^{n+1}|^2 + \frac{1}{2} \left(r^{n+\frac{1}{2}} +S\right)\left( \left(\phi^{n+1}\right)^2 -1  - S \right) -\frac{1}{4}\left( r^{n+\frac{1}{2}} \right)^2\right) \ d \mathbf{x} + \frac{S^2}{4}|\Omega|.
\end{equation}
\end{theorem}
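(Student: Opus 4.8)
The plan is to reduce the claim to a one-step energy identity
\[
\tilde E\bigl(\phi^{n+1},r^{n+\frac12}\bigr)-\tilde E\bigl(\phi^{n},r^{n-\frac12}\bigr)=\int_\Omega \mu^{n+\frac12}\,\nabla_\tau\phi^{n+1}\,d\mathbf{x},
\]
and then to sum this over the time levels and invoke the positive definiteness of the $L1^+$ kernel. Writing $\tilde E^{n+1}:=\tilde E(\phi^{n+1},r^{n+\frac12})$, I would compute the one-step difference term by term. The gradient part gives $\frac{\varepsilon^2}{2}\bigl(|\nabla\phi^{n+1}|^2-|\nabla\phi^n|^2\bigr)=\varepsilon^2\,\nabla\phi^{n+\frac12}\cdot\nabla(\nabla_\tau\phi^{n+1})$, which after integration by parts under the periodic/Neumann boundary conditions contributes $-\varepsilon^2\int_\Omega(\Delta\phi^{n+\frac12})\,\nabla_\tau\phi^{n+1}\,d\mathbf{x}$.

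For the local potential part $G(\phi,r):=\frac12(r+S)(\phi^2-1-S)-\frac14 r^2$, I would split $G(\phi^{n+1},r^{n+\frac12})-G(\phi^n,r^{n-\frac12})$ into a variation in $\phi$ at frozen $r=r^{n+\frac12}$ and a variation in $r$ at frozen $\phi=\phi^n$. The $\phi$-variation telescopes cleanly into $\tfrac12(r^{n+\frac12}+S)\bigl((\phi^{n+1})^2-(\phi^n)^2\bigr)=(r^{n+\frac12}+S)\,\phi^{n+\frac12}\,\nabla_\tau\phi^{n+1}$. The $r$-variation equals $\tfrac12(r^{n+\frac12}-r^{n-\frac12})\bigl[\,((\phi^n)^2-1-S)-\tfrac12(r^{n+\frac12}+r^{n-\frac12})\,\bigr]$, and this is exactly where the staggered relation (\ref{Dis_RRER_CH3}) enters: it forces the bracket to vanish identically, so the whole $r$-variation drops out. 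This exact cancellation is the structural reason the relaxation discretization respects the energy law, and I expect it to be the crux of the argument. Combining the surviving $\phi$-variation with the gradient contribution reassembles the integrand into $-\varepsilon^2\Delta\phi^{n+\frac12}+(r^{n+\frac12}+S)\phi^{n+\frac12}=\mu^{n+\frac12}$ via (\ref{Dis_RRER_CH2}), establishing the one-step identity.

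To close the argument I would convert the identity into a definite-sign quadratic form. Integrating (\ref{Dis_RRER_CH1}) over $\Omega$ and using the boundary conditions yields mass conservation, so by induction each increment $v^k:=\nabla_\tau\phi^k$ (and hence $\partial_\tau^\alpha\phi^{n+\frac12}$) has zero mean and $(-M\Delta)^{-1}$ is legitimately defined on it. Setting $\langle f,g\rangle_{-1}:=\int_\Omega\bigl((-M\Delta)^{-1}f\bigr)g\,d\mathbf{x}$, a positive definite inner product on mean-zero functions, and using (\ref{Dis_RRER_CH1}) in the form $\mu^{n+\frac12}=-(-M\Delta)^{-1}\partial_\tau^\alpha\phi^{n+\frac12}$ together with (\ref{L1+}), the one-step identity becomes $\tilde E^{n+1}-\tilde E^{n}=-\sum_{k=1}^{n+1}b_{n+1-k}^{(n+1)}\langle v^k,v^{n+1}\rangle_{-1}$. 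Summing from the first step gives
\[
\tilde E\bigl(\phi^{n+1},r^{n+\frac12}\bigr)-\tilde E\bigl(\phi^{0},r^{-\frac12}\bigr)=-\sum_{m=1}^{n+1}\sum_{k=1}^{m}b_{m-k}^{(m)}\langle v^k,v^m\rangle_{-1}\le 0,
\]
where nonnegativity of the double sum follows from the positive definiteness of $\{b_{n-k}^{(n)}\}$ \cite[Corollary 2.1]{tang2019energy}, and the endpoint value $\tilde E(\phi^0,r^{-\frac12})=E(\phi^0)$ is the consistency already noted for the chosen initial $r^{-\frac12}$.

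The main obstacle is twofold. First, one must identify and verify the exact cancellation of the $r$-variation, which hinges on matching the frozen value $(\phi^n)^2-1-S$ against the average $\tfrac12(r^{n+\frac12}+r^{n-\frac12})$ supplied by (\ref{Dis_RRER_CH3}); this is the step that genuinely exploits the time-staggered relaxation. Second, one must promote the scalar positive-definiteness of the kernel to the operator inner product $\langle\cdot,\cdot\rangle_{-1}$. For the latter I would diagonalize $-M\Delta$ in the Fourier basis (periodic case) or the Neumann eigenbasis, on each eigenmode of which $\langle\cdot,\cdot\rangle_{-1}$ reduces to a positive multiple of the usual scalar product, apply the scalar kernel inequality mode by mode, and sum over modes; the mean-zero property of the $v^k$ ensures the inverse Laplacian is well defined throughout.
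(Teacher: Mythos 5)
Your proposal is correct and follows essentially the same route as the paper's proof: the same one-step identity obtained by testing \eqref{Dis_RRER_CH1} with $M^{-1}\Delta^{-1}\nabla_\tau\phi^{k+1}$ and \eqref{Dis_RRER_CH2} with $\nabla_\tau\phi^{k+1}$, the same exact cancellation of the $r$-variation through the staggered relation \eqref{Dis_RRER_CH3}, and the same conclusion by telescoping and invoking positive definiteness of the $L1^+$ kernel in the $H^{-1}$-type inner product. If anything, your write-up is slightly more careful than the paper's: you apply kernel positive definiteness only to the summed double form (which is what it genuinely provides) and you justify the well-definedness of $\Delta^{-1}$ via discrete mass conservation, whereas the paper asserts nonpositivity of each individual step in \eqref{caputo_inner}.
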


\begin{proof}
    Taking the inner product of Eq.~\eqref{Dis_RRER_CH1} with $  M^{-1}\Delta^{-1}\nabla_\tau \phi^{k+1}$ and Eq.~\eqref{Dis_RRER_CH2} with $\phi^{k+1}-\phi^k$ in $L^2$ space, and summing the resulting equations, we derive that
    \begin{equation} \label{proof1}
         M^{-1}\left( \Delta^{-1}\nabla_\tau\phi^{k+1}, \partial_\tau^\alpha \phi^{k+ \frac{1}{2}}\right)= \left( \Delta \mu^{k+\frac{1}{2}}, \Delta^{-1}\nabla_\tau\phi^{k+1}\right) =\left(  \mu^{k+\frac{1}{2}},  \nabla_\tau\phi^{k+1}\right).
    \end{equation}
    For the left term of Eq.(\ref{proof1}), by setting $\psi^k = \nabla\left(\Delta^{-1} \phi^k\right)$, it holds that
        \begin{equation}\label{caputo_inner}
        \left( \Delta^{-1}\nabla_\tau\phi^{k+1}, \partial_\tau^\alpha \phi^{k+ \frac{1}{2}}\right)= -\left( \nabla_\tau  \psi^{k+1}  ,  \partial_\tau^\alpha \psi^{k+ \frac{1}{2}}\right)  \leq 0.
    \end{equation}
Notice that the following relationship holds
\begin{equation}
    \begin{aligned}
(r^{k+\frac{1}{2}} + S) \frac{\phi^{k} + \phi^{k+1}}{2} \nabla_{\tau} \phi^{k+1}
&= \frac{1}{2} (r^{k+\frac{1}{2}} +S) \left[ (\phi^{k+1})^2 - (\phi^{k})^2 \right]
- \frac{1}{2} (r^{k-\frac{1}{2}} + S) \left[ (\phi^{k})^2 - (\phi^{k})^2 \right] \notag \\
&= \frac{1}{2} (r^{k+\frac{1}{2}} + S) (\phi^{k+1})^2
- \frac{1}{2} (r^{k-\frac{1}{2}} + S) (\phi^{k})^2
- \frac{1}{2} (r^{k+\frac{1}{2}} - r^{k-\frac{1}{2}}) (\phi^{k})^2. \label{eq:19}
\end{aligned}
\end{equation}
Thus, for the right term of Eq.(\ref{proof1}), there is
\begin{equation}\label{proof2}
    \begin{aligned}
(\nabla_{\tau} \phi^{k+1}, u^{k+\frac{1}{2}})
&= \left( \varepsilon^2 \Delta \frac{\phi^{k} + \phi^{k+1}}{2} + (r^{k+\frac{1}{2}} +S) \frac{\phi^{k} + \phi^{k+1}}{2}, \nabla_{\tau} \phi^{k+1} \right)  \\
&= \frac{\varepsilon^2}{2} \left( \|\nabla \phi^{k+1}\|^2_{L^2} - \|\nabla \phi^{k}\|^2_{L^2} \right)
+ \left( (r^{k+\frac{1}{2}} +S) \frac{\phi^{k} + \phi^{k+1}}{2}, \nabla_{\tau} \phi^{k+1} \right) \\
&= \frac{\varepsilon^2}{2} \left( \|\nabla \phi^{k+1}\|^2_{L^2} - \|\nabla \phi^{k}\|^2_{L^2} \right)
+ \frac{1}{2} \left( (r^{k+\frac{1}{2}} +S) \left(  (\phi^{k+1})^2-1-S\right), 1 \right)  \\
&\quad - \frac{1}{2} \left( (r^{k-\frac{1}{2}} +S) \left( (\phi^{k})^2-1 -S\right), 1 \right)
- \frac{1}{4} \left( (r^{k+\frac{1}{2}})^2 - (r^{k-\frac{1}{2}})^2, 1 \right)  \\
&= \tilde{E}\left( \phi^{k+1}, r^{k+\frac{1}{2}}\right) - \tilde{E}\left( \phi^{k }, r^{k-\frac{1}{2}}\right).
\end{aligned}
\end{equation}
Combining Eq.(\ref{caputo_inner}) and Eq.(\ref{proof2}), we demonstrate that
    \begin{equation}\label{Dis_RRER_CH2_ener}
        \tilde E\left(\phi^{k+1},r^{k+\frac{1}{2}}\right) - \tilde E\left(\phi^{k},r^{k-\frac{1}{2}}\right) = -\left( \nabla_\tau\psi^{k+1} , \partial_\tau^\alpha \psi^{k+\frac{1}{2}} \right) \leq 0.
    \end{equation}

By summing $\tilde E\left(\phi^{k+1},r^{k+\frac{1}{2}}\right) - \tilde E\left(\phi^{k},r^{k-\frac{1}{2}}\right)$ from $k=0$ to $n$, the discrete energy boundedness law (\ref{disenergylawch}) is established for the $L1^{+}$ linear relaxation scheme (\ref{Dis_RRER_CH}).

\end{proof}

\begin{remark}
We show below the connection between the modified energy and the original energy. Note that
\begin{equation}
    \begin{aligned}
        E(\phi^{n+1}) &\approx \tilde E\left( \phi^{n+1}, r^{n+1}\right)\\
         &= \int_\Omega \left(\frac{\varepsilon^2}{2}|\nabla \phi^{n+1}|^2 + \frac{1}{2} \left(r^{n+1} +S\right)\left( \left(\phi^{n+1}\right)^2 -1  - S \right) -\frac{1}{4}\left( r^{n+1} \right)^2\right) \ d \mathbf{x} + \frac{S^2}{4}|\Omega|\\
        &= \int_\Omega \left(\frac{\varepsilon^2}{2}|\nabla \phi^{n+1}|^2 + \frac{1}{2} \left(r^{n+\frac{1}{2}} +S\right)\left( \left(\phi^{n+1}\right)^2 -1  - S \right) -\frac{1}{4}\left( r^{n+\frac{1}{2}} \right)^2\right) \ d \mathbf{x} + \frac{S^2}{4}|\Omega|\\
        &\quad +\int_\Omega\frac{1}{2}\left( r^{n+1}-r^{n+\frac{1}{2}} \right)\left( \left(\phi^{n+1}\right)^2 -1  - S \right) -\frac{1}{4}\left( \left( r^{n+1} \right)^2-  \left( r^{n+\frac{1}{2}} \right)^2\right) d\mathbf{x}\\
        & \approx \tilde E\left(\phi^{n+1}, r^{n+\frac{1}{2}}\right)+\frac{1}{4}\left\|r^{n+1}-r^{n+\frac{1}{2}}  \right\|_{L^2}^2.
    \end{aligned}
\end{equation}
The variables are second-order accurate in time. In the discrete case, the modified energy is a second-order approximation of the original energy.
\end{remark}

\subsection{The $L1^+$ linear relaxation scheme for the time-fractional A-C equation}
For the following time-fractional A-C equation
\begin{equation}\label{TFAC}
    \partial_t^\alpha \phi = M\left(\varepsilon^2\Delta \phi + (\phi^2-1)\phi\right)
\end{equation}
with the free energy $E(\phi) =\int_\Omega \left(\frac{\varepsilon^2}{2}|\nabla \phi|^2 + \frac{1}{4}\left(\phi^2-1\right)^2\right) d\mathbf{x}$, we have the following similar second-order linear relaxation scheme:
\begin{subequations}\label{Dis_RRER_AC}
    \begin{align}
        &\partial_\tau^\alpha \phi^{n+\frac{1}{2}} =  M\mu^{n+\frac{1}{2}},\label{Dis_RRER_AC1}\\
        &\mu^{n+\frac{1}{2}} = -\varepsilon^2 \Delta \phi^{n+\frac{1}{2}} +  \left(r^{n+\frac{1}{2}} + S \right) \phi^{n+\frac{1}{2}} ,\label{Dis_RRER_AC2}\\
        &\frac{r^{n+\frac{1}{2}} +r^{n-\frac{1}{2}} }{2} = \left(\phi^n \right)^2-1 - S.\label{Dis_RRER_AC3}
    \end{align}
\end{subequations}

\begin{theorem}\label{theorem_energy_TFAC}
    The $L1^+$ linear relaxation scheme (\ref{Dis_RRER_AC}) preserves the modified energy dissipation law
    \begin{equation}\label{disenergylaw}
        \tilde E\left( \phi^{n+1} , r^{n+\frac{1}{2}}\right)\leq   \tilde E\left( \phi^{0} , r^{-\frac{1}{2}}\right)=  E\left( \phi^0\right),
    \end{equation}
where
\begin{equation}
    \tilde {E} (\phi^{n+1},r^{n+\frac{1}{2}}) = \int_\Omega \frac{\varepsilon^2}{2}|\nabla \phi^{n+1}|^2 + \frac{1}{2} \left(r^{n+\frac{1}{2}} +S\right)\left( (\phi^{n+1})^2 -1  - S \right) -\frac{1}{4}\left( r^{n+\frac{1}{2}} \right)^2 \ d \mathbf{x} + \frac{S^2}{4}|\Omega|.
\end{equation}
\end{theorem}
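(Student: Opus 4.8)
The plan is to mirror the proof of Theorem~\ref{theorem_energy_TFCH} almost verbatim: the A-C and C-H schemes differ only through the operator $\mathcal{G}$ in the time-fractional balance ($\mathcal{G}=-1$ here versus $\mathcal{G}=\Delta$ there), whereas the definition of $\mu$ in \eqref{Dis_RRER_AC2}, the relaxation update \eqref{Dis_RRER_AC3}, and the modified energy $\tilde E$ are identical to their C-H counterparts. First I would fix $k$ and take the $L^2$ inner product of the discrete balance \eqref{Dis_RRER_AC1} with $M^{-1}\nabla_\tau\phi^{k+1}$, then substitute the expression \eqref{Dis_RRER_AC2} for $\mu^{k+\frac12}$. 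Because the A-C flow carries the sign $\mathcal{G}=-1$, this produces the A-C analogue of \eqref{proof1},
\begin{equation}\nonumber
    M^{-1}\left(\nabla_\tau\phi^{k+1},\,\partial_\tau^\alpha\phi^{k+\frac12}\right) = -\left(\mu^{k+\frac12},\,\nabla_\tau\phi^{k+1}\right),
\end{equation}
the crucial difference from C-H being that no inverse Laplacian $\Delta^{-1}$ is needed on the left.

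Next I would evaluate the right-hand side. Since \eqref{Dis_RRER_AC2}, \eqref{Dis_RRER_AC3} and the definition of $\tilde E$ coincide with the C-H case, the computation \eqref{eq:19}--\eqref{proof2} carries over word for word: the relaxation identity \eqref{Dis_RRER_AC3} telescopes the quadratic-in-$r$ contribution $(r^{k+\frac12}+S)\tfrac{\phi^k+\phi^{k+1}}{2}\nabla_\tau\phi^{k+1}$, the gradient term yields $\tfrac{\varepsilon^2}{2}(\|\nabla\phi^{k+1}\|^2_{L^2}-\|\nabla\phi^{k}\|^2_{L^2})$, and altogether $(\mu^{k+\frac12},\nabla_\tau\phi^{k+1})=\tilde E(\phi^{k+1},r^{k+\frac12})-\tilde E(\phi^{k},r^{k-\frac12})$. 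Nothing in this step is model-specific, so it requires no modification.

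The place where A-C is in fact \emph{easier} than C-H is the left-hand side. In Theorem~\ref{theorem_energy_TFCH} the inverse Laplacian forced the substitution $\psi^k=\nabla(\Delta^{-1}\phi^k)$ and an integration by parts before positivity could be invoked; here no such transformation is required, and the positive-definiteness of the $L1^+$ convolution kernel \cite[Corollary 2.1]{tang2019energy} applies directly to $\phi$, giving $(\nabla_\tau\phi^{k+1},\partial_\tau^\alpha\phi^{k+\frac12})\geq0$. With $M>0$ the displayed identity then reads $\tilde E(\phi^{k+1},r^{k+\frac12})-\tilde E(\phi^{k},r^{k-\frac12})=-M^{-1}(\nabla_\tau\phi^{k+1},\partial_\tau^\alpha\phi^{k+\frac12})\leq0$, i.e. the one-step dissipation inequality.

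Finally I would sum this inequality over $k=0,\dots,n$ and invoke the initialization $r^{-\frac12}=\phi^0(1-\phi^0)-S$, which gives $\tilde E(\phi^0,r^{-\frac12})=E(\phi^0)$, to conclude \eqref{disenergylaw}. I do not expect a genuine obstacle, since the argument is structurally lighter than the C-H case; the only point demanding care is the sign bookkeeping—checking that removing $\Delta^{-1}$ while changing $\mathcal{G}=\Delta$ into $\mathcal{G}=-1$ leaves the dissipation inequality pointing in the correct direction—and confirming that the discrete kernel positivity of \cite{tang2019energy} is here applied to $\phi$ itself rather than to the transformed variable $\psi$.
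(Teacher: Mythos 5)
Your proposal is correct and is exactly the argument the paper intends: the paper omits the proof of Theorem~\ref{theorem_energy_TFAC} as being ``similar to that of Theorem~\ref{theorem_energy_TFCH}'', and you reproduce that proof with precisely the necessary changes (inner product with $M^{-1}\nabla_\tau\phi^{k+1}$ instead of $M^{-1}\Delta^{-1}\nabla_\tau\phi^{k+1}$, kernel positive-definiteness applied to $\phi$ itself rather than to $\psi=\nabla(\Delta^{-1}\phi)$, the right-hand-side telescoping computation \eqref{proof2} carried over unchanged, and the final summation over $k$ with the initialization of $r^{-\frac12}$). The only point to flag is that your identity $M^{-1}\bigl(\nabla_\tau\phi^{k+1},\partial_\tau^\alpha\phi^{k+\frac12}\bigr)=-\bigl(\mu^{k+\frac12},\nabla_\tau\phi^{k+1}\bigr)$ presupposes the dissipative form $\partial_\tau^\alpha\phi^{n+\frac12}=-M\mu^{n+\frac12}$, consistent with $\mathcal{G}=-1$ in \eqref{the1-1}, whereas \eqref{Dis_RRER_AC1} as printed reads $+M\mu^{n+\frac12}$ (an evident sign typo, under which the same argument would make the modified energy nondecreasing), so your silent correction is the right reading of the scheme rather than an error in your proof.
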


\begin{proof}
As the argument is similar to that of Theorem \ref{theorem_energy_TFCH}, we omit the proof here for brevity.
\end{proof}

\section{Numerical results}
\label{section3}
This section presents two numerical examples to illustrate the effectiveness of the proposed numerical method. We consider the periodic boundary condition on the domain $[0,\pi]^2$ for the time-fractional A-C and C-H equations, and use the Fourier spectral method for spatial discretization. The time steps are designed as $t_n = \left(\frac{n}{N}\right)^{rNt}$. Without specific needs, some parameters are fixed to be $S=2$, $\epsilon=0.2$, $M=0.1$.

\begin{example}
(Convergence tests)
We first present an example in which appropriate source terms are selected so that the time-fractional A-C and C-H models admit the following exact solution:
\begin{equation}
    \phi(x,y,t) = \left(1-t^{2.5}\right)\left( \frac{1}{4}\sin(2x)\cos(2y) + 0.45\right),\ t\in[0,0.5].
\end{equation}
\end{example}

 \begin{table}[ht]
\centering
\caption{$L^\infty$-errors and convergence rates of $\phi$ and $r$ for the time-fractional A-C equation at $t=0.5$.}
\begin{tabular}{c c c c c c c c c c}
\toprule
 & & \multicolumn{2}{c}{$\alpha = 0.3$} & \multicolumn{2}{c}{$\alpha = 0.6$} & \multicolumn{2}{c}{$\alpha = 0.9$} & \multicolumn{2}{c}{$\alpha = 1$} \\
\cmidrule(lr){3-4} \cmidrule(lr){5-6} \cmidrule(lr){7-8} \cmidrule(lr){9-10}
 & $N $ & Error & Order & Error & Order & Error & Order & Error & Order \\
\midrule
\multirow{4}{*}{$\phi$} & 8   & $2.62E{-3}$ & -- & $3.36E{-4}$ & -- & $5.02E{-5}$ & -- & $1.63E{-5}$ & -- \\
 & 16  & $6.91E{-4}$ & 1.92 & $8.81E{-5}$ & 1.93 & $1.34E{-5}$ & 1.91 & $4.14E{-6}$ & 1.98 \\
 & 32  & $1.77E{-4}$ & 1.97 & $2.27E{-5}$ & 1.95 & $3.55E{-6}$ & 1.92 & $1.05E{-6}$ & 1.98 \\
 & 64  & $4.47E{-5}$ & 1.98 & $5.82E{-6}$ & 1.97 & $9.32E{-7}$ & 1.93 & $2.63E{-7}$ & 1.99 \\
\midrule
\multirow{4}{*}{$r$} & 8   & $8.75E{-3}$ & -- & $1.24E{-3}$ & -- & $2.18E{-4}$ & -- & $7.93E{-5}$ & -- \\
 & 16  & $2.46E{-3}$ & 1.83 & $3.20E{-4}$ & 1.95 & $5.54E{-5}$ & 1.98 & $1.93E{-5}$ & 2.04 \\
 & 32  & $6.48E{-4}$ & 1.92 & $8.16E{-5}$ & 1.97 & $1.41E{-5}$ & 1.98 & $4.75E{-6}$ & 2.03 \\
 & 64  & $1.66E{-4}$ & 1.96 & $2.07E{-5}$ & 1.98 & $3.57E{-6}$ & 1.98 & $1.17E{-6}$ & 2.02 \\
\bottomrule
\end{tabular}
\label{tab:AC}
\end{table}

\begin{table}[ht]
\centering
\caption{$L^\infty$-errors and convergence rates of $\phi$ and $r$ for the time fractional C-H equation at $t=0.5$.}
\begin{tabular}{c c c c c c c c c c}
\toprule
 & & \multicolumn{2}{c}{$\alpha = 0.3$} & \multicolumn{2}{c}{$\alpha = 0.6$} & \multicolumn{2}{c}{$\alpha = 0.9$} & \multicolumn{2}{c}{$\alpha = 1$} \\
\cmidrule(lr){3-4} \cmidrule(lr){5-6} \cmidrule(lr){7-8} \cmidrule(lr){9-10}
 & $N $ & Error & Order & Error & Order & Error & Order & Error & Order \\
\midrule
\multirow{4}{*}{$\phi$} & 8   & $2.22E{-2}$ & -- & $3.61E{-3}$ & -- & $5.99E{-4}$ & -- & $2.36E{-4}$ & -- \\
 & 16  & $7.23E{-3}$ & 1.62 & $1.03E{-3}$ & 1.81 & $1.64E{-4}$ & 1.87 & $6.16E{-5}$ & 1.94 \\
 & 32  & $2.06E{-3}$ & 1.81 & $2.77E{-4}$ & 1.89 & $4.42E{-5}$ & 1.89 & $1.58E{-5}$ & 1.96 \\
 & 64  & $5.43E{-4}$ & 1.92 & $7.25E{-5}$ & 1.93 & $1.18E{-5}$ & 1.91 & $4.02E{-6}$ & 1.97 \\
\midrule
\multirow{4}{*}{$r$} & 8   & $4.42E{-2}$ & -- & $1.51E{-2}$ & -- & $3.26E{-3}$ & -- & $1.44E{-3}$ & -- \\
 & 16  & $2.42E{-2}$ & 0.87 & $4.38E{-3}$ & 1.79 & $8.30E{-4}$ & 1.97 & $3.41E{-4}$ & 2.08 \\
 & 32  & $8.13E{-3}$ & 1.57 & $1.17E{-3}$ & 1.91 & $2.11E{-4}$ & 1.98 & $8.16E{-5}$ & 2.06 \\
 & 64  & $2.32E{-3}$ & 1.81 & $3.02E{-4}$ & 1.95 & $5.35E{-5}$ & 1.98 & $1.97E{-5}$ & 2.05 \\
\bottomrule
\end{tabular}
\label{tab:CH}
\end{table}

In the simulation, we use graded temporal meshes, with $rNt = 8, 3, 1.5, 1$ for $\alpha = 0.3, 0.6, 0.9, 1$, respectively. The $L^\infty$ errors and convergence rates with different $\alpha$ are listed in Tables \ref{tab:AC} and \ref{tab:CH}, which show that the proposed schemes can achieve second-order accuracy for the time-fractional A-C and C-H equations.

\begin{example}
We consider the time-fractional A-C and C-H equations with the following initial condition:
\begin{equation}
    \phi_0 =  \tanh\left(\frac{0.5 - \sqrt{\left({\frac{x}{0.5}}\right)^2+\left({\frac{y}{0.3}}\right)^2}}{ \sqrt{2}\epsilon }\right).
\end{equation}
\end{example}

\begin{figure}[H]
  \centering
  \begin{subfigure}[b]{0.45\textwidth}
    \centering
    \includegraphics[width=\textwidth]{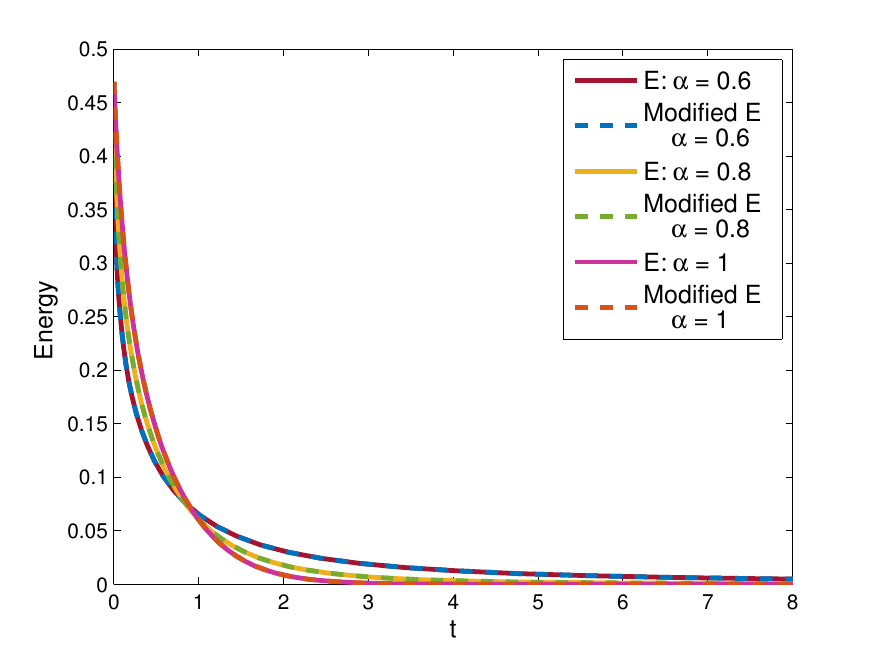}
    \caption{}
    \label{fig:AC_energy}
  \end{subfigure}
  \hfill
    \begin{subfigure}[b]{0.45\textwidth}
      \centering
      \includegraphics[width=\textwidth]{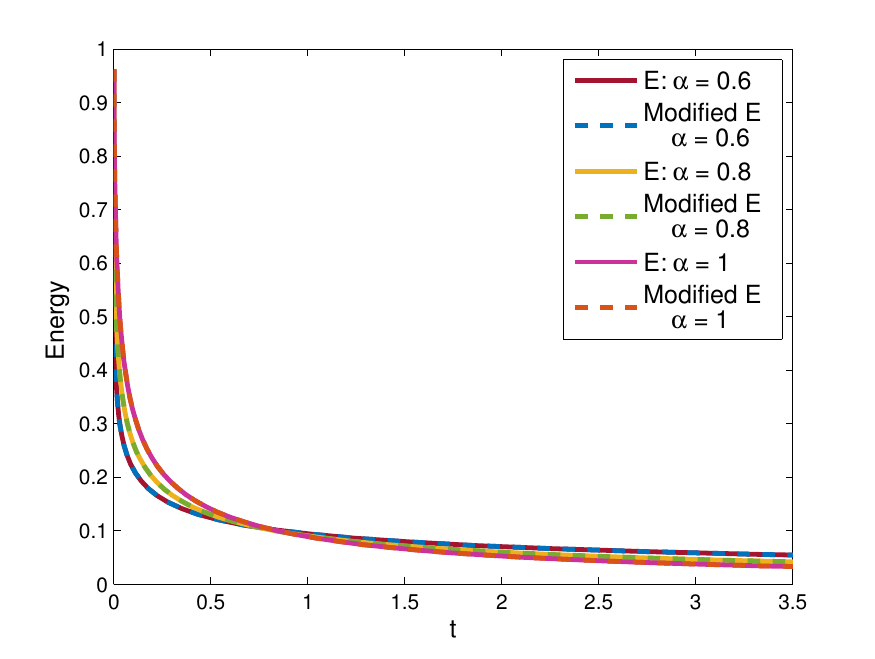}
      \caption{}
      \label{fig:CH_Energy}
    \end{subfigure}
  \caption{Original energy and modified energy with different $\alpha$. (a) The time-fractional A-C equation. (b) The time-fractional C-H equation.
}
  \label{fig:AC}
\end{figure}

\begin{figure}[H]
  \centering
  \begin{subfigure}[b]{0.45\textwidth}
    \centering
    \includegraphics[width=\textwidth]{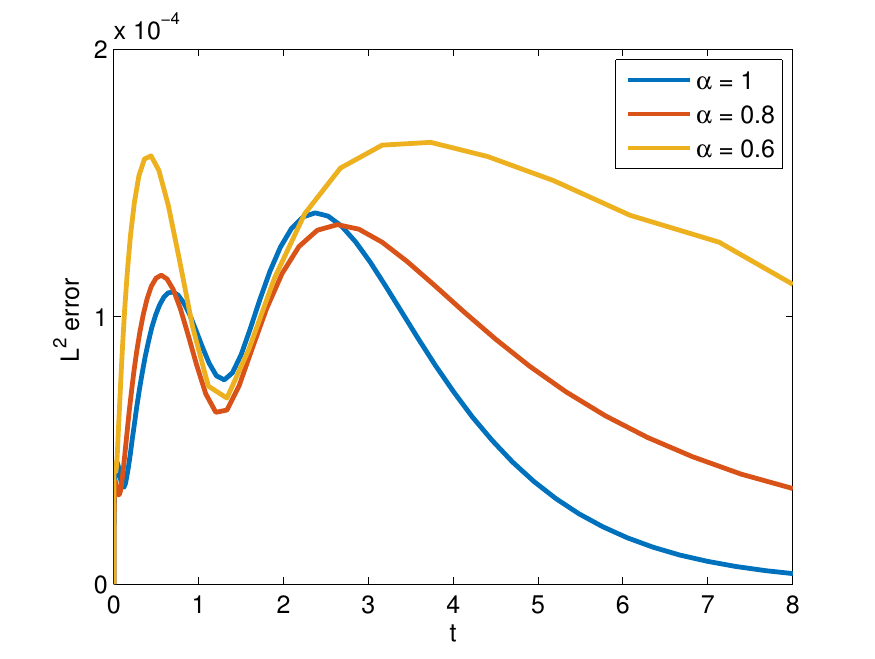}
    \caption{}
    \label{fig:AC_r_L2}
  \end{subfigure}
  \hfill
  \begin{subfigure}[b]{0.45\textwidth}
    \centering
    \includegraphics[width=\textwidth]{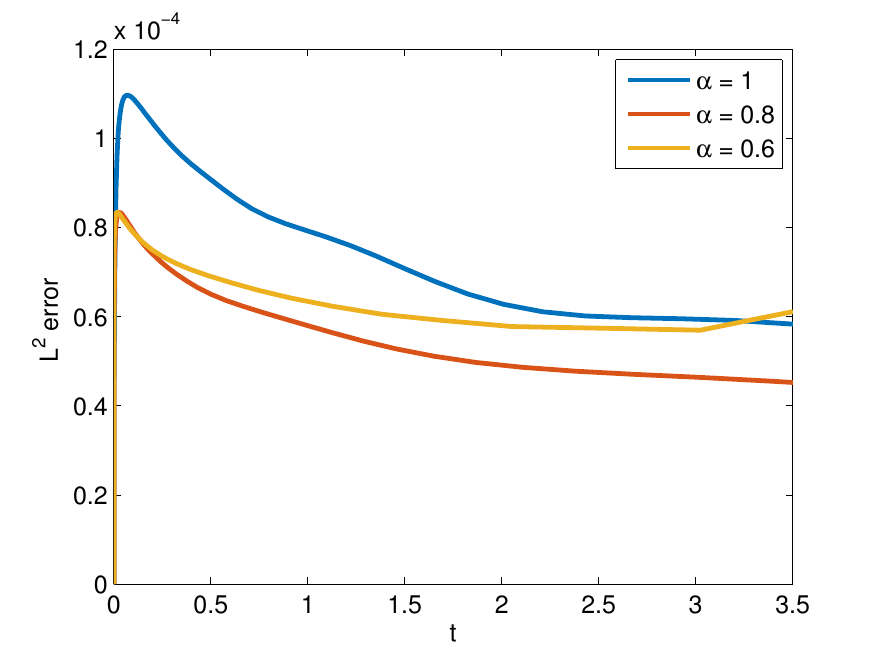}
    \caption{}
    \label{fig:CH_r_L2}
  \end{subfigure}
  \caption{$L^2$-error between auxiliary variable $r^{n+1/2}$ and original variable $(\phi^{n+1/2})^2-1-S$. (a) The time-fractional A-C equation. (b) The time-fractional C-H equation.}
  \label{fig:CH}
\end{figure}

Figures \ref{fig:AC}(a) and (b) show that the variation of the original and modified energies with different $\alpha$ for the time-fractional A-C and C-H equations. We can observe that the modified energy is highly consistent with the original energy until the steady state. Furthermore, we investigate the numerical errors between the auxiliary variable $r$ and the original variable $\phi^2-1-S$. As shown in Figures \ref{fig:CH}(a) and (b), the errors are related to the phase transition rate. The errors increase when the phase changes abruptly, and decrease as it gradually approaches a steady state. However, for the IEQ and SAV methods, since the auxiliary variable $r$ is obtained by solving a differential equation, the numerical errors between the original and auxiliary variables or between the original energy and the modified energy, accumulate over time (see \cite{jiang2022improving}), which is well known in the community. We note that the proposed linear relaxation scheme introduces an algebraic equation to solve the auxiliary variable, so that the numerical inconsistency between the original and auxiliary variables does not deteriorate over time. This provides a new perspective for designing structure-preserving numerical schemes for phase-field models in the future.

\section{Conclusions and remarks}
\label{section4}
In this paper, we develop $L1^{+}$ linear relaxation schemes for solving the time-fractional A-C and C-H equations. An auxiliary variable is introduced to approximate the original nonlinear term. Time-stepping is then carried out numerically by directly solving the algebraic equation associated with the expression of the auxiliary variable, which prevents the error accumulation over time observed in IEQ and SAV methods, where the algebraic equation is replaced by its time derivative. We rigorously prove the unconditional energy stability of the numerical schemes. Numerical results demonstrate the effectiveness of the proposed numerical schemes. In our future work, we shall consider using the ideas in this paper to design efficient structure-preserving schemes for the coupled fluid and phase‑field problems.

\section*{Acknowledgments}
H. Yu is partially supported by the China Scholarship Council for one year of research at the University of Dundee. Z. Wang is partially supported by the China Postdoctoral Science Foundation under grant 2024M760239. P. Lin is partially supported by the National Natural Science Foundation of China 12371388, 11861131004.

\bibliographystyle{elsarticle-num}
\bibliography{Ref}
\end{document}